\newtheorem{theorem}{Theorem}[section]
\newtheorem{lemma}[theorem]{Lemma}
\newtheorem*{thmA}{Theorem A}
\newtheorem*{thmB}{Corollary B}
\theoremstyle{remark}
\newtheorem{remark}[theorem]{Remark}
\numberwithin{equation}{section}
\begin{document}
\title[indices of non-supersolvable maximal subgroups   in finite groups]{indices  of non-supersolvable maximal subgroups in finite groups}
    \author[Beltr\'an and Shao]{Antonio Beltr\'an\\
     Departamento de Matem\'aticas\\
      Universitat Jaume I \\
     12071 Castell\'on\\
      Spain\\
     \\Changguo Shao \\
College of Science\\ Nanjing University of Posts and Telecommunications\\
     Nanjing 210023 Yadong\\
      China\\
     }

 \thanks{Antonio Beltr\'an: abeltran@uji.es ORCID ID: https://orcid.org/0000-0001-6570-201X \newline
 \indent Changguo Shao: shaoguozi@163.com ORCID ID: https://orcid.org/0000-0002-3865-0573}

\keywords{Maximal subgroups; Supersolvable groups, Prime power index subgroups, Simple groups}

\subjclass[2010]{20E28, 20D15, 20D06}

\begin{abstract} Two classic results, due to K. Doerk and P. Hall respectively, establish the solvability of those finite groups all of whose maximal subgroups are supersolvable,
and the solvability  of finite groups in which all maximal subgroups have prime or squared prime index.
In this note we describe the structure of the  non-solvable finite groups whose  maximal subgroups are either supersolvable or  have prime or squared prime index.

\end{abstract}

\maketitle

\section{Introduction}

It is an established fact that the structure of a  finite group is influenced, or even  determined, by the properties of some or all of its maximal subgroups.
 In recent years, several authors have obtained distinct solvability criteria and remarkable properties for the groups whose maximal subgroups satisfy certain conditions relating to solvability or nilpotency (see  \cite{Guo, LPZ, SLT, YJK}).
   Similarly, in a recent investigation \cite{BS}, the authors demonstrated that if each maximal subgroup of a group $G$ is nilpotent or has prime or the square of a prime index,
    then $G$ is solvable. By replacing nilpotency by supersolvability, we aim to investigate further the structure of those finite groups whose non-supersolvable  maximal subgroups
     have prime or squared prime index.

      \medskip
      The research presented here is founded on two fundamental results: the solvability of minimal non-supersolvable groups, which were initially studied by K. Doerk \cite{Doerk},
      and a classic theorem of P. Hall \cite[VI.9.4]{Hup}, which states that if the index of each maximal
subgroup of a finite group $G$ is a prime or the square of a prime,  then $G$ is
solvable.
 It is noteworthy that the groups that verify our hypotheses do not necessarily  have to be solvable.
 The smallest non-abelian simple group, the alternating group ${\rm Alt}(5)$, serves as an illustrative example.
   However,  the structure of the non-solvable groups satisfying our assumptions is  subject to severe restrictions, as detailed in our main result.
    The reader is also referred to \cite{Ballester, Ballester2}, where several structural properties  and a solvability criterion are obtained in relation to normality and the number of non-supersolvable subgroups, respectively.

\begin{thmA}
Let $G$ be a non-solvable finite group and suppose that every  maximal subgroup of   $G$ is either supersolvable or has prime or squared prime index.
Let  $S(G)$ denote the solvable radical of $G$.  Then $S(G)$ is supersolvable, and one of the following possibilities holds:

\begin{itemize}

\item[(i)]  $G/S(G)$ is non-abelian simple and is isomorphic to ${\rm PSL}_2(5)$, ${\rm PSL}_2(7)$, ${\rm PSL}_2(8)$ or ${\rm PSL}_2(11)$;

\item[(ii)] $G/S(G)\cong {\rm Sym}(5)$, ${\rm PGL}_2(7)$ or ${\rm P\Gamma L}_2(8)$;

\item[(iii)]$S(G)<{\bf O}^2(G)<G$ and  ${\bf O}^{2}(G)/S(G)\cong  S_1\times \cdots \times  S_n,$
 with $n\geq 1$,  where all $S_i$ are isomorphic to  ${\rm PSL}_2(p^{2^a})$, for some odd prime $p$ and $a\geq 1$, or for some odd prime $p\equiv \pm 1$ $(mod~ 8)$  and $a=0$.
\end{itemize}

\end{thmA}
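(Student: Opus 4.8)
The plan is to exploit that the hypothesis passes to quotients and then to analyse the almost simple composition data through the classification of finite simple groups (CFSG). If $N\trianglelefteq G$ and $M/N$ is maximal in $G/N$, then $M$ is maximal in $G$ with $[G:M]=[G/N:M/N]$, and $M$ supersolvable forces $M/N$ supersolvable; hence the hypothesis is inherited by every quotient. Applying this to $\bar G:=G/S(G)$ gives a non-solvable group with trivial solvable radical satisfying the same hypothesis, so its socle $\mathrm{Soc}(\bar G)=S_1\times\cdots\times S_n$ is a direct product of non-abelian simple groups with $C_{\bar G}(\mathrm{Soc}(\bar G))=1$. By Doerk's theorem $\bar G$ cannot have all maximal subgroups supersolvable, and by Hall's theorem it cannot have all maximal subgroups of prime or squared prime index; the non-supersolvable maximal subgroups of non-prime-power index are precisely the objects the whole argument must control.

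The core is the almost simple case, and I would first settle which almost simple groups $X$ with socle $S$ satisfy the hypothesis. The key dichotomy is that a non-supersolvable maximal subgroup $H$ of $S$ which \emph{persists} as the socle part of a maximal subgroup of $X$ forces the index condition $[S:H]\in\{r,r^2\}$, whereas one that is \emph{absorbed} into a larger subgroup of $X$ imposes nothing. Running through the CFSG list and using Dickson's description of the subgroup lattice of $\mathrm{PSL}_2(q)$, the dangerous maximal subgroups are the Borel subgroup (supersolvable precisely when $q$ is prime, otherwise of index $q+1$), the subfield subgroups, and the subgroups $A_4,S_4,A_5$; demanding that each be absorbed or of prime or squared prime index eliminates every family except $\mathrm{PSL}_2(q)$ and pins down $q$. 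For the simple groups themselves this leaves exactly $\mathrm{PSL}_2(5),\mathrm{PSL}_2(7),\mathrm{PSL}_2(8),\mathrm{PSL}_2(11)$ of case (i); the arithmetic enters through the Borel index ($2^f+1\in\{5,9\}$ for even $q$) and through the indices of $A_4,S_4,A_5$ for $q=p$ prime. The extensions are then governed by whether these bad subgroups are swallowed by the outer part: the congruence $q\equiv\pm1\pmod 8$ makes $S_4\le\mathrm{PSL}_2(q)$, so $S_4$ is absorbed into the index-two subgroup rather than surviving as a novelty of non-prime-power index, and this is exactly what separates the surviving extensions of (ii) (e.g.\ $\mathrm{PGL}_2(7)$ survives, $\mathrm{PGL}_2(11)$ does not) and produces the congruence conditions of (iii).

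For $n\ge 2$ I would invoke an O'Nan--Scott-type analysis of $\bar G$. If $\bar G$ moved two of the simple factors, the normaliser of a diagonal subgroup of $S_i\times S_j$ would be a non-supersolvable maximal subgroup of index $|S_i|$, which is never a prime power; and if a factor $S_i$ is normal, then maximal subgroups of $\bar G$ containing $\prod_{k\ne i}S_k$ correspond to maximal subgroups of the almost simple quotient $\bar G/\prod_{k\ne i}S_k$ and are automatically non-supersolvable, so that quotient must have \emph{every} non-socle maximal subgroup of prime or squared prime index. Both mechanisms are severely restrictive, and tracking the $2$-residual packages the outcome: only outer automorphisms that are $2$-elements can be realised, so $\bar G/\mathbf O^2(\bar G)$ is a $2$-group and $\mathbf O^2(\bar G)$ is the product of the simple factors, which is the shape recorded in (iii) (with $q=p^{2^a}$ reflecting that the admissible field automorphisms have $2$-power order and $p\equiv\pm1\pmod 8$ when $a=0$), while the genuinely almost simple survivors, including the one with an odd outer part $\mathrm{P\Gamma L}_2(8)$, fall under (i)--(ii).

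Finally I must show $S(G)$ is supersolvable, i.e.\ that every $G$-chief factor $V$ inside $R:=S(G)$ is cyclic. Since supersolvability is a saturated formation I may reduce to the case that such a $V$ is complemented by a maximal subgroup $M$ with $[G:M]=|V|=p^m$. As $M\cong G/V$ still involves the $\mathrm{PSL}_2$ composition factors it is non-solvable, hence non-supersolvable, so the hypothesis forces $p^m\in\{p,p^2\}$, i.e.\ $m\le 2$. The residual case $m=2$ is killed by examining the action of the layer $E$ (the preimage of $\mathrm{Soc}(\bar G)$) on $V$: if $E$ centralises $V$ then $V$ is a central chief factor of $E$, and rank two is incompatible with the cyclic Schur multipliers of the $\mathrm{PSL}_2(q_i)$, while if $E$ acts nontrivially then a simple factor would have to embed irreducibly into $\mathrm{GL}_2(p)$, which the orders forbid. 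The step I expect to be the principal obstacle is the almost simple classification of the second paragraph together with its product refinement: it is there that CFSG, Dickson's list, and a careful census of which maximal subgroups are absorbed versus which remain of non-prime-power index must be combined to extract precisely the groups of (i)--(ii) and the congruence-and-$\mathbf O^2$ description of (iii).
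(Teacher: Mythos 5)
There is a genuine gap, and it is fatal to your final step. Your plan for proving that $S(G)$ is supersolvable is to show that every $G$-chief factor $V$ inside $S(G)$ is cyclic, killing the case $|V|=p^2$ by arguing that no simple factor can embed irreducibly into ${\rm GL}_2(p)$. But that intermediate statement is \emph{false}: take $G=V\rtimes {\rm SL}_2(7)$ with $V=\mathbb{F}_7^2$ the natural module. Every maximal subgroup of $G$ either contains $V$ (giving $V\rtimes 2.{\rm Sym}(4)$ of index $7$, or $V\rtimes B$ with $B$ the Borel subgroup of ${\rm SL}_2(7)$, which is supersolvable since $B$ stabilizes a line of $V$), or complements $V$ (giving ${\rm SL}_2(7)$ of index $7^2$). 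So $G$ satisfies the hypothesis of Theorem A, yet $V$ is a non-cyclic $G$-chief factor inside $S(G)=V\times Z({\rm SL}_2(7))$ -- note that $S(G)$ is abelian, hence supersolvable, even though the $G$-chief factor is not cyclic. Your order argument fails precisely because the quasisimple \emph{layer}, not the simple factor, acts: ${\rm SL}_2(p)\leq {\rm GL}_2(p)$ exists for every $p$. The paper avoids all of this with a two-line argument you never find: since $\overline{G}=G/S(G)$ is non-solvable, Hall's theorem guarantees a maximal subgroup $\overline{M_2}$ whose index is neither prime nor squared prime; by hypothesis $M_2$ is then supersolvable, and $S(G)\leq M_2$, done. (The same device -- the guaranteed existence of a supersolvable maximal subgroup supplementing any non-solvable normal subgroup -- is also what the paper uses to get a \emph{unique} minimal normal subgroup, a step your socle-based setup skips.)

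A second concrete error sits in your $n\geq 2$ analysis: you claim that if $\overline{G}$ moves two simple factors, the normalizer of a diagonal subgroup of $S_i\times S_j$ is automatically a non-supersolvable maximal subgroup of index $|S_i|$. This is only true in untwisted situations; in a twisted extension (say $(S\times S)\rtimes C_4$ where the generator swaps the factors and its square induces an outer automorphism of $S$) no element outside the base group normalizes any twisted diagonal, so ${\bf N}_{\overline{G}}(D)$ sits inside an index-two maximal subgroup and produces no contradiction. Indeed your claim, if true, would force $n=1$ and contradict conclusion (iii) of the theorem itself, which allows $n\geq 2$ with $G$ permuting the factors transitively. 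The paper only invokes diagonal maximality in the specific wreath products $S\wr A$ and ${\rm Aut}(S)\wr A$ arising in its squared-prime-index subcase, justified by the exact correspondence between overgroups of the diagonal in $S\times S$ and normal subgroups of $S$ (its Lemma 2.6 and Remark 2.7); the general $n\geq 2$ case is instead handled by showing, via a Frattini argument, that all Sylow normalizers ${\bf N}_G(P)$ must be supersolvable, which both eliminates ${\rm PSL}_2(2^p)$, ${\rm PSL}_2(3^p)$, ${\rm Sz}(2^p)$, ${\rm PSL}_3(3)$ and the $p\equiv\pm3 \pmod 8$ cases (through the structure of Sylow normalizers in ${\rm PSL}_2(q)$, e.g.\ ${\rm Alt}(4)$), and then forces $G/N$ to be a $2$-group: a normal $2$-complement in the supersolvable ${\bf N}_G(P)$ would make any odd-order element permuting the factors centralize $P\in{\rm Syl}_2(N)$, which it cannot. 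Your ``only $2$-element outer automorphisms survive'' is the right conclusion but you supply no mechanism for it; this Sylow-normalizer argument is the missing engine, and without it (and without the correct treatment of diagonals) your sketch does not close cases (ii)--(iii). Your first paragraph (quotient inheritance) and the broad CFSG strategy of the simple case are sound, though the paper reaches the short list by citing Guralnick, Demina--Maslova and its own Theorem C rather than re-running Dickson's lattice.
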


\medskip
In general, the product of two normal supersolvable subgroups of a finite group need not be supersolvable,
 and  this leads to the conclusion that the supersolvable radical of a group does not exist.
 Nevertheless, we want to note that the  groups satisfying the assumptions of Theorem A do possess a largest normal supersolvable subgroup, which coincides with the solvable radical.
An immediate particular case of Theorem A is the following.

\begin{thmB}
There exist exactly four  non-abelian simple groups whose non-supersolvable maximal subgroups have prime or squared prime index.
 These are ${\rm PSL}_2(5)$, ${\rm PSL}_2(7)$, ${\rm PSL}_2(8)$ and ${\rm PSL}_2(11)$.
\end{thmB}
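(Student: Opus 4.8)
The plan is to obtain Corollary B as a direct reading of Theorem A, supplemented by a routine verification that the four candidate groups genuinely meet the hypothesis. For the first direction, let $G$ be a non-abelian simple group all of whose non-supersolvable maximal subgroups have prime or squared prime index. Then $G$ is in particular non-solvable, and since a non-abelian simple group has no nontrivial proper normal subgroup and is itself not solvable, its solvable radical is trivial: $S(G)=1$. Thus $G=G/S(G)$, and I would feed $G$ into Theorem A and analyze the three possible outcomes.

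Case (i) of Theorem A asserts exactly that $G\cong {\rm PSL}_2(5)$, ${\rm PSL}_2(7)$, ${\rm PSL}_2(8)$ or ${\rm PSL}_2(11)$, which is the desired list, so the task reduces to ruling out (ii) and (iii) via simplicity. For (ii), the groups ${\rm Sym}(5)$, ${\rm PGL}_2(7)$ and ${\rm P\Gamma L}_2(8)$ are all non-simple, containing ${\rm Alt}(5)$, ${\rm PSL}_2(7)$ and ${\rm PSL}_2(8)$ respectively as proper nontrivial normal subgroups, so none can be isomorphic to the simple group $G$. For (iii), the chain $S(G)<{\bf O}^2(G)<G$ would exhibit ${\bf O}^2(G)$ as a proper nontrivial normal subgroup of $G$, contradicting simplicity; equivalently, a non-abelian simple group has no nontrivial $2$-quotient, whence ${\bf O}^2(G)=G$ and (iii) is impossible. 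Hence only (i) survives, and every non-abelian simple group satisfying the hypothesis lies in the stated four-element list.

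For the converse — that each of these four groups really does satisfy the hypothesis, so that the count is \emph{exactly} four — I would consult Dickson's description of the subgroup lattice of ${\rm PSL}_2(q)$ (or the ATLAS) and inspect the conjugacy classes of maximal subgroups. The point in each case is that the only non-supersolvable maximal subgroups carry an index that is prime or a prime squared: in ${\rm PSL}_2(5)\cong{\rm Alt}(5)$ the subgroup ${\rm Alt}(4)$ has index $5$; in ${\rm PSL}_2(7)$ the maximal subgroups ${\rm Sym}(4)$ have index $7$; in ${\rm PSL}_2(8)$ the Borel subgroup $C_2^3\rtimes C_7$ has index $9=3^2$; and in ${\rm PSL}_2(11)$ the subgroups ${\rm Alt}(5)$ have index $11$. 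All remaining maximal subgroups are dihedral or metacyclic Frobenius groups, hence supersolvable, so their possibly composite indices impose no constraint.

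The two directions then combine to give precisely the four listed groups. I expect the substantive content to reside entirely in Theorem A; within the corollary the only genuine work is the finite case-by-case verification in the converse, which is routine but must be carried out group by group. The one mildly subtle point is that the elementary abelian socle of $C_2^3\rtimes C_7$ prevents that Borel subgroup from being supersolvable, while its index $9$ still complies with the hypothesis — so this is the example illustrating that the simple groups here need not have all maximal subgroups supersolvable.
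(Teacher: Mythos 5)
Your proposal is correct and takes essentially the same route as the paper: Corollary~B is read off from Theorem~A by noting that a non-abelian simple group $G$ has $S(G)=1$ and ${\bf O}^2(G)=G$, which forces case (i) and excludes (ii) and (iii). Your explicit group-by-group converse verification is the same content the paper subsumes in Case~1 of the proof of Theorem~A, where each of the four candidates is checked to satisfy the hypotheses.
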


The strategy employed to prove Theorem A is based on the use of Guralnick's classification of non-abelian simple groups that possess a subgroup with prime power index,
 as well as  a variation of this theorem due to Demina and Maslova concerning groups whose non-solvable maximal subgroups have prime power index.
 Of course, both results depend on the Classification of Simple Finite Groups. Similarly, we have drawn upon a recent classification of non-abelian simple groups
  that have a subgroup of squared prime index \cite[Theorem C]{BS}, and other auxiliary results,
   including a characterization of supersolvability for the wreath product of more elementary character \cite{Durbin}.

\medskip
In this paper, all groups are assumed to be finite, and we follow standard notation (e.g.
 \cite{Robinson}). For further details regarding notation, properties and subgroup structure of finite simple groups, the reader is directed to various sources \cite{Con,Hup, Liebeck, Liebeck2}.
\medskip

\section{Preliminaries}

\medskip

As mentioned in the Introduction, the first result presented here is  Guralnick's classification of non-abelian simple groups having a subgroup of prime power index.
 It is based on the Classification  of Finite Simple Groups and is a crucial element of our development.

\begin{theorem} {\normalfont  \cite[Theorem 1]{Gura}} \label{g}
\ Let $G$ be a non-abelian simple group with $H < G$ and
$|G : H| = p^a$, $p$ prime. One of the following holds.

\begin{itemize}

\item[$(a)$] $G={\rm Alt}(n)$, and $H\cong {\rm Alt}(n-1)$, with $n= p^a$.

\item[$(b)$] $G = {\rm PSL}_n(q)$ and $H$ is the stabilizer of a line or hyperplane. Then
$|G: H|= (q^n-1)/(q - 1) = p^a$. (Note that $n$ must be prime).

\item[$(c)$] $G={\rm PSL}_2(11)$ and $H\cong {\rm Alt}(5)$.

\item[$(d)$] $G=M_{23}$ and $H\cong M_{22}$ or $G=M_{11}$ and $H\cong M_{10}$.

\item[$(e)$] $G= {\rm PSU}_4(2)\cong {\rm PSp}_4(3)$ and $H$ is the parabolic subgroup of
index $27$.
\end{itemize}
\end{theorem}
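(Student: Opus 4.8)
The plan is to reduce immediately to the case where $H$ is maximal: if $H \le M \le G$ with $M$ maximal, then $|G:M|$ divides $|G:H|=p^a$ and so is again a prime power, while each configuration in the conclusion describes a maximal subgroup. With $H$ maximal, the single structural fact driving everything is that $H$ is ``$p'$-large'': for every prime $r\neq p$ we have $r\nmid |G:H|$, so $|H|_r=|G|_r$ and $H$ contains a full Sylow $r$-subgroup; equivalently $H$ contains a Hall $p'$-subgroup of $G$, whereas $|H|_p=|G|_p/p^a$ shows $H$ does \emph{not} contain a Sylow $p$-subgroup. At this point the only realistic route is to invoke the Classification of Finite Simple Groups and run a case analysis over the three families: alternating groups, sporadic groups, and groups of Lie type.

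For the alternating groups I would use the O'Nan--Scott description of the maximal subgroups of $\mathrm{Alt}(n)$ as intransitive, imprimitive, or primitive. The point stabilizer $\mathrm{Alt}(n-1)$ has index $n$, which is a prime power exactly when $n=p^a$, yielding (a); for the remaining types one shows the index has at least two distinct prime divisors (for the intransitive case the index is a binomial coefficient $\binom{n}{k}$, handled by Sylvester--Erd\H{o}s/Bertrand-type results, and the imprimitive and primitive indices are larger and composite), leaving only a short list of small $n$ to settle by hand; these small values are where the exceptional isomorphisms to $\mathrm{PSL}_2$ and to the Mathieu groups enter. The sporadic groups are then a finite inspection of the ATLAS maximal-subgroup lists, from which only $M_{11}\supset M_{10}$ (index $11$) and $M_{23}\supset M_{22}$ (index $23$) survive, giving (d).

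The groups of Lie type are the main case, and I would split on whether $p$ equals the defining characteristic $c$. If $p\neq c$, then $c\neq p$ forces $H$ to contain a full Sylow $c$-subgroup, so by the Borel--Tits theorem a maximal subgroup containing a Sylow $c$-subgroup is a maximal parabolic $P_J$; the index $|G:P_J|$ is then coprime to $c$ and factors into cyclotomic-type pieces, and primitive prime divisors (Zsygmondy's theorem) force the rank and Dynkin diagram to be minimal. This isolates $\mathrm{PSL}_n(q)$ with the line or hyperplane stabilizer, whose index $\frac{q^{n}-1}{q-1}$ is a prime power only when $n$ is prime, giving (b) (and, via exceptional isomorphisms, the small $\mathrm{PSL}_2$ instances), together with the cross-characteristic appearance of $\mathrm{PSU}_4(2)$ of index $27$. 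If instead $p=c$, then $H$ contains a Hall $c'$-subgroup but not a Sylow $c$-subgroup, hence is non-parabolic; comparing $c$-parts and applying Zsygmondy again shows such maximal subgroups are extremely scarce, and the only survivors are forced by exceptional isomorphisms between small Lie type groups, namely $\mathrm{PSL}_2(11)\supset \mathrm{Alt}(5)$ of index $11$ (case (c)) and $\mathrm{PSp}_4(3)\cong \mathrm{PSU}_4(2)$ with a parabolic of index $27=3^3$ (case (e)).

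The hard part will be the Lie type analysis on two fronts. In cross characteristic the obstacle is the number theory: proving that a prime-power parabolic index forces minimal rank, and pinning down exactly when the repunit $\frac{q^{n}-1}{q-1}$ is a prime power, which rests on a careful Zsygmondy/Bang argument. In defining characteristic the obstacle is instead the sheer breadth of non-parabolic maximal subgroups that could a priori contain a Hall $c'$-subgroup; ruling all of these out, and seeing that the genuine exceptions arise \emph{only} from the sporadic coincidences of small groups of Lie type, is the most delicate step of the whole classification.
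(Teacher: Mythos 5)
The paper offers no proof of this statement at all: it is imported verbatim as \cite[Theorem 1]{Gura}, so the only fair comparison is with Guralnick's original argument. Your plan reproduces that argument's architecture faithfully — CFSG trichotomy; point stabilizers and binomial-coefficient arithmetic via Sylvester--Erd\H{o}s for ${\rm Alt}(n)$; ATLAS inspection for sporadics; Borel--Tits plus cyclotomic factorization of parabolic indices in cross characteristic; Zsigmondy and exceptional isomorphisms for the stragglers ${\rm PSL}_2(11)\supset{\rm Alt}(5)$ and ${\rm PSU}_4(2)\cong{\rm PSp}_4(3)$. As a roadmap it is the right one. But as a proof it has three genuine defects.

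First, the claim that $H$ containing a Sylow $r$-subgroup of $G$ for every $r\neq p$ is ``equivalently'' the containment of a Hall $p'$-subgroup of $G$ is false: non-solvable groups generally have no Hall $p'$-subgroups (${\rm Alt}(5)$ has no subgroup of order $15$, so no Hall $2'$-subgroup exists to be contained in anything). Only the prime-by-prime Sylow containment is available; fortunately that is all Borel--Tits needs, so the error is not load-bearing, but the statement as written would fail. Second, your reduction to maximal $H$ does not prove the theorem as stated: the theorem classifies \emph{all} subgroups of index $p^a$, including $H$ itself, so after locating a maximal $M\supseteq H$ of index $p^b$ you must still show $H=M$. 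For ${\rm Alt}(n)$ this follows from the impossibility of $n=p^a$ and $n-1=p^c$ simultaneously, but for case (b) one needs a separate argument that no proper subgroup of the line stabilizer (a non-simple group with unipotent radical) has $p$-power index in $G$; this step is absent. Third, and most seriously, the defining-characteristic branch — ruling out every non-parabolic maximal subgroup of a Lie-type group whose order is divisible by the full $r$-part of $|G|$ for all $r\neq c$ — is dispatched with ``Zsygmondy again shows such maximal subgroups are extremely scarce.'' You correctly identify this as the delicate point, but no mechanism is given; in Guralnick's paper this is exactly where the detailed order arithmetic and appeals to prior subgroup-structure results live, and without it the cases (c) and (e) are observed rather than derived. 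So the proposal is a sound reconstruction of the strategy behind the cited theorem, not a proof of it.
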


The subsequent result, also referenced in the Introduction, is a slight variation of Guralnick's classification. It is essential  in the proof of Theorem A.

\begin{theorem}{\label{2}} \cite[Theorem 1]{DM}  Let $G$ be a non-solvable group in which every non-solvable maximal subgroup has
prime power index. Then:
\begin{itemize}

\item[(i)] the non-abelian composition factors of the group $G$ are pairwise isomorphic and are exhausted
by groups from the following list:
 \begin{itemize}
\item[(1)] ${\rm PSL}_2(2^p)$, where $p$ is a prime,
\item[(2)] ${\rm PSL}_2(3^p)$, where $p$ is a prime,
\item[(3)]  ${\rm PSL}_2(p^{2^a})$, where $p$ is an odd prime and $a \geq 0$,

\item[(4)] ${\rm Sz}(2^p)$, where $p$ is an odd prime,

\item[(5)] ${\rm PSL}_3(3)$;
\end{itemize}
\item[(ii)] for any simple group $S$ from the list in statement $({\rm i})$, there exists a group $G$ such that any
of its non-solvable maximal subgroups has primary index and $Soc(G)\cong S$.
\end{itemize}
\end{theorem}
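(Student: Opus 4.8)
The plan is to treat the two assertions separately, relying throughout on the Classification of Finite Simple Groups, on Guralnick's Theorem~\ref{g}, and on the explicit knowledge of the maximal subgroups of the low-rank almost simple groups (Dickson's list for $\mathrm{PSL}_2$, Suzuki's for $\mathrm{Sz}$) together with Thompson's description of the minimal simple groups. For part~(i) I would first record that the hypothesis is inherited by every quotient $G/N$: the maximal subgroups of $G/N$ are exactly the $M/N$ with $M$ maximal in $G$ and $M\supseteq N$, and both non-solvability and index are preserved. Passing to $\bar G=G/S(G)$ we may thus assume $S(\bar G)=1$, so that $\mathrm{Soc}(\bar G)=N=T_1\times\cdots\times T_m$ is a direct product of non-abelian simple groups with $C_{\bar G}(N)=1$ and $\mathrm{Inn}(N)\le\bar G\le\mathrm{Aut}(N)$. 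Since the non-abelian composition factors of $G$ are precisely the $T_i$, it suffices to analyse $\bar G$.

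Next I would show the $T_i$ are pairwise isomorphic and reduce to the almost simple case. Collecting the factors of one fixed isomorphism type into a characteristic (hence normal) subgroup $A<N$, a suitable non-solvable maximal subgroup of $\bar G$ separating $A$ from the product of the other types has index divisible by primes coming from the orders of two non-isomorphic simple groups, hence of composite index unless a single type occurs; this forces all $T_i\cong S$. With $\mathrm{Inn}(S^m)\le\bar G\le\mathrm{Aut}(S)\wr\mathrm{Sym}(m)$, a maximal subgroup of $\bar G$ containing a maximal subgroup of the single-coordinate almost simple group $\mathrm{Aut}(S)$ (together with the remaining coordinates) then reduces the determination of $S$ to the case $m=1$.

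The crux is the almost simple case $S\le\bar G\le\mathrm{Aut}(S)$: the hypothesis says every non-solvable maximal subgroup of $\bar G$ has prime power index, and one must run through the simple groups via CFSG to force $S$ into the list $(1)$--$(5)$. Guralnick's Theorem~\ref{g} supplies the backbone, pinning down which simple groups admit a subgroup of prime power index at all; the extra work is to rule out, in \emph{every} admissible $\bar G$, a non-solvable maximal subgroup of composite index. The governing principle is absorption: a ``bad'' non-solvable maximal subgroup (a non-solvable subfield subgroup, or an $\mathrm{Alt}(5)$, of composite index) is harmless exactly when it lies inside a normal subgroup of $\bar G$ of prime power index. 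For $\mathrm{PSL}_2(q)$ this happens precisely when $q=p^{2^a}$, since then the subfield lattice of $\mathrm{GF}(q)$ is a chain of degree-$2$ steps, so every non-solvable subfield subgroup and every $\mathrm{Alt}(5)$ already lies in $\mathrm{PSL}_2(q)$, of index $2$ in $\mathrm{PGL}_2(q)$. Simple groups outside the list fail this criterion in every admissible extension, and verifying this group-by-group is the main obstacle, where the bulk of the case analysis resides.

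For part~(ii) I would exhibit explicit $G$. When $S$ is minimal simple, that is $\mathrm{PSL}_2(2^p)$, $\mathrm{PSL}_2(3^p)$ with $p$ odd, $\mathrm{Sz}(2^p)$, $\mathrm{PSL}_3(3)$, or $\mathrm{PSL}_2(p)$ with $p\equiv\pm2\pmod{5}$, take $G=S$: all of its proper subgroups are solvable, so the hypothesis holds vacuously and $\mathrm{Soc}(G)=S$. For the remaining members, namely $S=\mathrm{PSL}_2(p^{2^a})$ (covering $\mathrm{PSL}_2(9)\cong\mathrm{Alt}(6)$, the groups $\mathrm{PSL}_2(p)$ with $p\equiv\pm1\pmod{5}$, and all $a\ge1$), take $G=\mathrm{PGL}_2(p^{2^a})$. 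By the absorption principle above, every non-solvable subfield subgroup and every $\mathrm{Alt}(5)$ lies inside $\mathrm{PSL}_2(q)$, so the only non-solvable maximal subgroup of $G$ is $\mathrm{PSL}_2(q)$ itself, of index $2$; hence $G$ satisfies the hypothesis with $\mathrm{Soc}(G)\cong S$, realising every group in the list.
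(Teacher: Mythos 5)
This statement is not proved in the paper at all: it is quoted verbatim as \cite[Theorem 1]{DM}, so there is no internal proof to compare your sketch against, and your proposal has to be judged as a self-contained argument. As such it is a reasonable outline of the expected strategy (reduce to the almost simple case, apply CFSG via Guralnick's theorem, exhibit examples), but several load-bearing steps are wrong or missing. First, your reduction asserts that the non-abelian composition factors of $G$ are ``precisely the $T_i$'' in $\mathrm{Soc}(G/S(G))=T_1\times\cdots\times T_m$. That is false as stated: $\bar G/N$ embeds in $\mathrm{Out}(S)\wr \mathrm{Sym}(m)$ modulo inner parts, and while $\mathrm{Out}(S)$ is solvable by Schreier, the permutation image in $\mathrm{Sym}(m)$ can contribute non-abelian composition factors such as $\mathrm{Alt}(k)$; one needs an induction on the order (the hypothesis does pass to quotients) together with an argument that the factors appearing in the top layers are isomorphic to the $T_i$ — this consistency step is absent. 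Second, your ``pairwise isomorphic'' argument does not work as written: if $M$ is a maximal subgroup with $G=MN_1$ for a minimal normal subgroup $N_1$, then $|G:M|=|N_1:N_1\cap M|$, so the index involves only primes dividing $|N_1|$; your claim that a separating maximal subgroup has index divisible by primes from two non-isomorphic simple groups is unsupported. Third, the reduction to $m=1$ is invalid as described: a subgroup of the shape $M_1\times S_2\times\cdots\times S_m$ (extended by coordinate stabilizers) is generally not maximal — by Lemma~\ref{m2}, maximal supplements of $N$ intersect $N$ in a product of conjugate proper subgroups permuted \emph{transitively} — and the hypothesis does not visibly descend to the coordinate almost simple group $\mathbf{N}_G(S_1)/\mathbf{C}_G(S_1)$, since indices of maximal subgroups there do not correspond to indices in $G$. (The paper's own Theorem A, case (2.1)(a), shows how delicate maximality in such wreath-type configurations is.)

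Beyond these specific flaws, the heart of the theorem — ruling out, for every simple group outside the list and every admissible almost simple extension, all configurations, and confirming the groups on the list — is explicitly deferred in your sketch (``where the bulk of the case analysis resides''), but that case analysis \emph{is} the theorem. Note also a structural omission in your crux paragraph: Guralnick's Theorem~\ref{g} only governs groups possessing \emph{some} subgroup of prime power index, whereas the DM hypothesis is satisfied vacuously by any group all of whose maximal subgroups are solvable; this vacuous branch (Thompson's minimal simple groups) is exactly why $\mathrm{Sz}(2^p)$ and $\mathrm{PSL}_2(3^p)$, which do not appear in Guralnick's list at all, occur in items (2) and (4). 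Your framing of ``admissible $\bar G$ via Guralnick'' would wrongly exclude them in part (i), even though you correctly invoke minimal simplicity in part (ii). On the positive side, your part (ii) is essentially right: $G=S$ works vacuously for the minimal simple members, and $G=\mathrm{PGL}_2(p^{2^a})$ works for the rest, since for $q=p^{2^a}$ every subfield subgroup and every $\mathrm{Alt}(5)$ is absorbed into $\mathrm{PSL}_2(q)$ of index $2$ — though the claim that $\mathrm{PSL}_2(q)$ is then the \emph{only} non-solvable maximal subgroup of $\mathrm{PGL}_2(q)$ needs a citation to the known determination of maximal subgroups of almost simple groups with socle $\mathrm{PSL}_2(q)$ (e.g., that subfield subgroups $\mathrm{PGL}_2(q_0)$ are maximal in $\mathrm{PGL}_2(q)$ only for odd-degree extensions), rather than the heuristic ``absorption principle'' you state.
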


In addition, we need another specific case of Guralnick's classification.

 \begin{theorem} \label{clas} \cite[Theorem C]{BS} Let $S$ be a finite non-abelian simple group that has a subgroup of index $q^2$ with $q$ prime. Then one of the following holds.
\begin{itemize}
\item[(a)] $S \cong {\rm Alt}(q^2)$ with subgroups of index $q^2$ $(q\geq 3)$, or
\item[(b)] $S\cong {\rm PSL}_2(8)$ with subgroups of index $3^2$, or
\item[(c)] $S\cong {\rm PSL}_5(3)$ with  subgroups of index $11^2$.
\end{itemize}
\end{theorem}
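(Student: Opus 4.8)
The plan is to reduce everything to Guralnick's classification, Theorem~\ref{g}, and then run a case-by-case sieve. Since a subgroup of index $q^2$ has prime power index (for $q$ prime we may write $q^2 = p^a$ with $p = q$ and $a = 2$), the hypothesis forces $S$ into one of the five families (a)--(e) of Theorem~\ref{g}. The whole argument then amounts to deciding, within each family, exactly when the listed index can equal a prime squared.

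First I would dispose of the sporadic and small families by direct index computations. In case (c) the subgroup ${\rm Alt}(5)$ has index $11$ in ${\rm PSL}_2(11)$, a prime; in case (d) the indices are $23$ and $11$, again primes; and in case (e) the parabolic subgroup has index $27 = 3^3$. None of these is a prime square, so (c), (d), (e) are eliminated. In case (a) the index equals $n = p^a = q^2$, so $n = q^2$ and $H \cong {\rm Alt}(q^2-1)$; here one needs only to exclude $q = 2$, since ${\rm Alt}(4)$ is not simple, which leaves precisely $S \cong {\rm Alt}(q^2)$ with $q \geq 3$, i.e.\ outcome (a) of the statement.

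The real content lies in case (b), where $S = {\rm PSL}_n(q_0)$ with $q_0$ a prime power and $n$ prime, and the index of the stabilizer of a line or a hyperplane is the repunit
\[
\frac{q_0^{\,n}-1}{q_0-1} = 1 + q_0 + \cdots + q_0^{\,n-1} = q^2 .
\]
For $n = 2$ this reads $q_0 + 1 = q^2$, i.e.\ $q_0 = (q-1)(q+1)$; as this product of two integers differing by $2$ must be a prime power, either $q = 2$ (yielding the non-simple ${\rm PSL}_2(3)$, discarded) or the base prime is $2$ and $q = 3$, $q_0 = 8$, giving $S \cong {\rm PSL}_2(8)$ of index $9$ --- outcome (b). For $n \geq 3$ the equation $\tfrac{q_0^{\,n}-1}{q_0-1} = q^2$ is an instance of the Nagell--Ljunggren square equation $\tfrac{x^n-1}{x-1} = y^2$; by Ljunggren's classical determination of its solutions, the only ones with $n \geq 3$ are $(x,n,y) = (3,5,11)$ and $(7,4,20)$. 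The second has $n = 4$, which is not prime, and is discarded, while the first yields $S \cong {\rm PSL}_5(3)$ of index $121 = 11^2$ --- outcome (c).

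The main obstacle is exactly this last Diophantine step: excluding all prime exponents $n \geq 5$ requires the full strength of the square case of the Nagell--Ljunggren equation, not merely an elementary estimate. For $n = 3$ one does have the elementary squeeze $q_0^2 < 1 + q_0 + q_0^2 < (q_0+1)^2$, so the repunit is trapped strictly between consecutive squares and is never a perfect square; but for $n \geq 5$ no such squeeze is available and the deep result must be invoked. Collecting the surviving possibilities, namely (a), (b) with $n = 2$, and (b) with $n = 5$, gives precisely the three alternatives in the statement.
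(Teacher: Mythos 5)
Your argument is correct: the sieve through Guralnick's Theorem~\ref{g} (direct index checks killing cases (c), (d), (e), the point stabilizer reading of case (a) with $q\geq 3$, the elementary factorization $q_0=(q-1)(q+1)$ into a prime power for $n=2$, and Ljunggren's determination of the square solutions of the Nagell--Ljunggren equation $(x^n-1)/(x-1)=y^2$ for prime $n\geq 3$) accounts for exactly the three listed groups. Note that the present paper does not prove this statement but imports it from \cite{BS}; your reconstruction follows essentially the same route as the proof in that source, whose only input beyond Theorem~\ref{g} is precisely the Diophantine step you identify as the real content.
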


For the reader's convenience, in the next lemma,  we  compile in detail the structure of the  Sylow normalizers of ${\rm PSL}_2(q)$ with $q$ a prime power.

\begin{lemma}\label{lemma2}  Let $G={\rm PSL}_2(q)$, where $q$ is a power of prime $p$ and $d = (2, q + 1)$. Let $r$ be a prime divisor of $|G|$ and $R\in {\rm Syl}_r(G)$.

\begin{itemize}

\item[(1)] If $r=p$, then ${\bf N}_G(R)=R\rtimes C_{\frac{q-1}{d}}$;

\item[(2)] If $2\neq r\mid \frac{q+1}{d}$, then ${\bf N}_G(R)=C_{\frac{q+1}{d}}\rtimes C_2$;

\item[(3)] If $2\neq r\mid \frac{q-1}{d}$, then ${\bf N}_G(R)=C_{\frac{q-1}{d}}\rtimes C_2$;

\item[(4)] Assume $p\neq r=2$.

\begin{itemize}

\item[(4.1)] If $q\equiv\pm1$ $(mod~ 8)$, then ${\bf N}_G(R)=R$;

\item[(4.2)] If $q\equiv\pm3$ $(mod~ 8)$, then ${\bf N}_G(R)={\rm Alt}(4)$.

\end{itemize}
\end{itemize}
\end{lemma}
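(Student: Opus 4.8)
The plan is to read off every assertion from Dickson's classical description of the subgroup structure of $\mathrm{PSL}_2(q)$ (see \cite[II.8.27]{Hup}), combined with the standard conjugacy and centraliser data for its unipotent and semisimple elements. The three facts I would isolate at the outset are: $G$ acts $2$-transitively on the projective line $\mathbb{P}^1(\mathbb{F}_q)$; a Sylow $p$-subgroup is elementary abelian of order $q$, namely the unipotent radical fixing a single point; and $G$ has exactly two conjugacy classes of maximal cyclic tori, of orders $\frac{q-1}{d}$ and $\frac{q+1}{d}$, each with dihedral normaliser of twice its order. For $(1)$ I would observe that a Sylow $p$-subgroup $R$ consists of the unitriangular elements fixing a unique point $\infty$ of $\mathbb{P}^1(\mathbb{F}_q)$; hence any $g\in \mathbf{N}_G(R)$ permutes the (singleton) fixed-point set of $R$ and so fixes $\infty$, which places $\mathbf{N}_G(R)$ inside the point stabiliser $B$ (the Borel subgroup). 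Since $R\trianglelefteq B$, this yields $\mathbf{N}_G(R)=B=R\rtimes C_{(q-1)/d}$.

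For $(2)$ and $(3)$, let $r\neq p$ be an odd prime dividing $|G|$. As $r\nmid 2$, it divides exactly one of $q-1$, $q+1$, so $R$ is cyclic and lies in a single torus $T$ of order $\frac{q\mp1}{d}$, of which it is the \emph{characteristic} Sylow $r$-subgroup; thus $\mathbf{N}_G(T)\le \mathbf{N}_G(R)$. Conversely, a generator of $R$ is a semisimple element of order greater than $2$, hence regular (its two eigenvalues $\lambda,\lambda^{-1}$ are distinct), so its centraliser is precisely $T$. Therefore $\mathbf{C}_G(R)=T$ and $\mathbf{N}_G(R)\le \mathbf{N}_G(\mathbf{C}_G(R))=\mathbf{N}_G(T)$, giving $\mathbf{N}_G(R)=\mathbf{N}_G(T)$, which is the dihedral group $C_{(q\mp1)/d}\rtimes C_2$. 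This settles both cases simultaneously.

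For $(4)$ we have $q$ odd, and here $R$ is dihedral with $|R|=\tfrac12\,|q^2-1|_2$, a classical fact I would again cite. If $q\equiv\pm3\pmod 8$ then $|R|=4$, so $R\cong V_4$; since the three involutions have centralisers meeting in $V_4$ one gets $\mathbf{C}_G(V_4)=V_4$ and $\mathbf{N}_G(V_4)/V_4\hookrightarrow \mathrm{Aut}(V_4)\cong \mathrm{Sym}(3)$. Because $R$ is a full Sylow $2$-subgroup, $|\mathbf{N}_G(V_4)|_2=4$ forces the image of this embedding to have odd order, and Burnside's normal $p$-complement theorem rules out the trivial image (otherwise $G$ would have a normal $2$-complement); the only remaining possibility is $\mathbf{N}_G(V_4)\cong V_4\rtimes C_3\cong \mathrm{Alt}(4)$, which is $(4.2)$.

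The step I expect to be the main obstacle is $(4.1)$, where $q\equiv\pm1\pmod 8$ and $R$ is dihedral of order $2^s\ge 8$, and one must prove $R$ is self-normalising. My plan is to exploit the cyclic maximal subgroup $C\cong C_{2^{s-1}}$, which is characteristic in $R$, so that $\mathbf{N}_G(R)\le \mathbf{N}_G(C)$. Since $|C|\ge 4$, a generator of $C$ is regular semisimple, whence $\mathbf{C}_G(C)=T$ for the torus $T$ carrying the large $2$-part, and the reduction of the previous paragraph gives $\mathbf{N}_G(C)=\mathbf{N}_G(T)=C_{(q\mp1)/d}\rtimes C_2$, a dihedral group of order $q\mp1$ inside which $R$ is a Sylow $2$-subgroup. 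A direct computation in this dihedral group shows that a Sylow $2$-subgroup is self-normalising, and hence $\mathbf{N}_G(R)=\mathbf{N}_{\mathbf{N}_G(T)}(R)=R$. The delicate point throughout is to verify that the $2$-part of $|G|$ is genuinely concentrated in one torus, so that $C$ is regular and the dihedral reduction applies; once $R$ has been located inside $\mathbf{N}_G(T)$ the remaining verifications are routine. (The degenerate value $q=3$ in $(4.2)$ is checked by hand, since there $G\cong \mathrm{Alt}(4)=\mathbf{N}_G(V_4)$.)
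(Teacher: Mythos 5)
Your proposal is correct, but it takes a genuinely different route from the paper: the paper's entire proof is a one-line citation to Dickson's catalogue of subgroups of ${\rm PSL}_2(q)$ (\cite[II.8.27]{Hup}), which lists the Sylow normalizers directly, whereas you rederive each normalizer from a small set of structural primitives --- $2$-transitivity on the projective line for (1), the identification ${\bf C}_G(R)=T$ for regular semisimple generators together with ${\bf N}_G(R)\leq {\bf N}_G({\bf C}_G(R))$ for (2) and (3), the $\overline{N/C}\hookrightarrow {\rm Aut}(V_4)$ embedding plus Burnside's normal $p$-complement theorem for (4.2), and the characteristic cyclic subgroup of index $2$ to reduce (4.1) to a computation inside a dihedral torus normalizer. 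All of these steps check out, including the delicate ones: the $2$-part of $|G|$ is indeed concentrated in a single torus when $q\equiv\pm 1 \pmod 8$, a Sylow $2$-subgroup of order at least $8$ is self-normalizing in a dihedral group, and your separate treatment of $q=3$ handles the failure of simplicity there. Your approach buys a self-contained argument that exposes \emph{why} each normalizer has the stated shape, at the cost of length; the paper's citation buys brevity at the cost of opacity. One small imprecision worth tightening: in (2), (3) and (4.1), distinctness of the eigenvalues $\lambda,\lambda^{-1}$ of an ${\rm SL}_2$-lift is not by itself enough to force ${\bf C}_G(\cdot)=T$ in the projective group (an involution of ${\rm PSL}_2(q)$ lifts to an element with $\lambda=-\lambda^{-1}$, distinct eigenvalues, yet has dihedral centralizer); what you actually need, and what you do have, is that the element's order \emph{in} ${\rm PSL}_2(q)$ exceeds $2$, equivalently $\lambda^2\neq\pm 1$, so the conclusion stands once the parenthetical justification is stated this way.
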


\begin{proof} This follows from \cite[Theorem 2.8.27]{Hup}.
\end{proof}

The following result, related to non-solvable minimal normal subgroups supplemented by maximal subgroups, is also needed in our development.

\begin{lemma}\label{m2}\cite[Lemma 2.4]{Q} Let $G$ be a finite group and $N$ a minimal normal subgroup of $G$ such that $N=S_1\times \cdots\times S_t$,
 with $ t\geq 1$ and $S_i$ are isomorphic non-abelian simple groups. Assume that $H$ is a maximal
subgroup of $G$ such that $G= HN$. Set $K=H\cap N$. Then one of the following holds:
\begin{itemize}
\item[(1)] $K=K_1\times \cdots\times K_t$, where $K_i< S_i$ for all $i$, and $H$ acts transitively on the
set $\{K_1,\cdots, K_t\}$.

\item[(2)] $K=A_1\times \cdots\times A_k$ is minimal normal in $H$, where $A_1\cong \cdots\cong A_k\cong S_1$, $k\mid t$ and $k<t$,
  and every nonidentity element of $A_i$ has length $t/k$ under the decomposition.
  \end{itemize}
\end{lemma}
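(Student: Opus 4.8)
My plan is to combine a single maximality argument with the known structure of subdirect products of non-abelian simple groups. First I would record two preliminary facts. Since $N\trianglelefteq G$, the intersection $K=H\cap N$ is normal in $H$, and $H$ permutes the simple factors $\{S_1,\dots,S_t\}$ transitively. The latter holds because $G$ acts by conjugation on the minimal normal subgroups of $N$, which are exactly the $S_i$; minimality of $N$ in $G$ forces this action to be transitive, while $N$ lies in its kernel (each $S_i\trianglelefteq N$ is fixed), so the action factors through $G/N$. As $G=HN$, the image of $H$ in $G/N$ is the whole quotient, hence $H$ has the same orbits on $\{S_1,\dots,S_t\}$ as $G$ and is transitive.

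The crucial point, where maximality of $H$ is used, is that there is no $H$-invariant subgroup $L$ strictly between $K$ and $N$. Indeed, if $K\le L\le N$ and $H$ normalizes $L$, then $HL=LH$ is a subgroup containing $H$, so by maximality $HL=H$ or $HL=G$. In the first case $L\le H\cap N=K$, giving $L=K$; in the second, the Dedekind modular law (using $L\le N$) gives $N=N\cap HL=(N\cap H)L=KL=L$, so $L=N$. I would apply this to $L=\pi_1(K)\times\cdots\times\pi_t(K)$, where $\pi_i\colon N\to S_i$ is the projection. Since $K$ is $H$-invariant and $H$ permutes the factors, this $L$ is $H$-invariant and contains $K$; therefore $L=K$ or $L=N$.

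If $L=K$, then $K=K_1\times\cdots\times K_t$ with $K_i=\pi_i(K)$ is a genuine direct product. Transitivity of $H$ on the factors makes $H$ permute $\{K_1,\dots,K_t\}$ transitively, and since $K\ne N$ (otherwise $N\le H$ and $H=HN=G$, contradicting maximality) not every $K_i$ equals $S_i$, whence $K_i<S_i$ for all $i$. This is conclusion $(1)$. If instead $L=N$, then $\pi_i(K)=S_i$ for all $i$, so $K$ is a subdirect product of the $S_i$. Here I would invoke the standard description of subdirect products of pairwise isomorphic non-abelian simple groups: there is a partition of $\{1,\dots,t\}$ into blocks $B_1,\dots,B_k$ such that $K=A_1\times\cdots\times A_k$, where each $A_j$ is a full diagonal subgroup of $\prod_{i\in B_j}S_i$, and in particular $A_j\cong S_1$. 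Because $H$ normalizes $K$ and permutes the factors transitively, it permutes the $A_j$, so the blocks form a block system for the transitive $H$-action; hence they all have common size $t/k$, giving $k\mid t$, and $H$ permutes $\{A_1,\dots,A_k\}$ transitively, which makes $K$ minimal normal in $H$. Since $K\ne N$ some block has size at least two, so $k<t$, and each nonidentity element of $A_j$ is supported on exactly the $t/k$ coordinates of its block. This is conclusion $(2)$.

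The genuinely technical ingredient, and the main obstacle, is the subdirect-product structure theorem used when $L=N$: one must establish that a subdirect product of non-abelian simple groups splits as a direct product of full diagonals over a partition of the index set, and then reconcile this partition with the transitive $H$-action to extract the divisibility $k\mid t$ and the minimal normality of $K$ in $H$. Everything else reduces to the maximality/Dedekind dichotomy of the second paragraph, which is precisely what rules out the ``full diagonal of a proper subgroup'' configurations that would otherwise fall outside the stated trichotomy.
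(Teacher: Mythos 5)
The paper offers no internal proof of this lemma: it is quoted verbatim from Qian \cite[Lemma 2.4]{Q}, so there is no in-paper argument to compare yours against; the comparison can only be on correctness and on the standard route. Your proof is correct. The transitivity of $H$ on $\{S_1,\dots,S_t\}$ is properly deduced from $G=HN$ together with the minimality of $N$ (the orbit argument and the fact that $N$ fixes each factor are both right), and the Dedekind/maximality dichotomy is the correct engine: applying it to the $H$-invariant subgroup $L=\pi_1(K)\times\cdots\times\pi_t(K)$ (whose $H$-invariance follows from $K^h=K$ and $(\pi_i(K))^h=\pi_{\sigma(i)}(K)$, which you use implicitly) yields exactly the split $L=K$ or $L=N$. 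In the first case your observation that $K\neq N$ (else $H\supseteq N$ and $H=HN=G$) plus transitivity forces $K_i<S_i$ for all $i$, giving (1). In the second case you invoke the one genuinely technical ingredient, the structure theorem for subdirect products of non-abelian simple groups (Scott's lemma, standard in treatments of the O'Nan--Scott theorem): a subdirect subgroup is a direct product of full diagonal subgroups over a partition of the coordinate set. Your reconciliation of that partition with the transitive $H$-action is sound: invariant partitions under a transitive action have blocks of common size, giving $k\mid t$ and transitivity on $\{A_1,\dots,A_k\}$; $K\neq N$ rules out singleton blocks, giving $k<t$; minimal normality of $K$ in $H$ then follows because any nontrivial $H$-invariant normal subgroup of $K=A_1\times\cdots\times A_k$ is a partial product of the $A_j$ (the standard description of normal subgroups of a direct product of non-abelian simple groups, another small quoted fact) and transitivity forces it to be all of $K$; and the full-diagonal description gives the length-$t/k$ support statement. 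Modulo the two standard quoted facts, which are legitimately citable, the proof is complete and presumably mirrors Qian's original argument.
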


 As stated in the Introduction, we will employ the following characterization of supersolvability for standard (restricted) wreath products.

\begin{theorem} \label{wr} \cite[Theorem]{Durbin} A finite (standard) wreath product $A \wr B$ is supersolvable if and
only if:
\begin{itemize}
\item[(i)] $A$ is nilpotent,
\item[(ii)] either $B$ is abelian or $B'$ is a (nontrivial) $p$-group with $A$ a
$p$-group for the same prime $p$, and
\item[(iii)] for each prime $q$ dividing $|A|$, $ q\equiv -1$ $ ($mod $m)$, where $m$ is the
exponent of $B/Q$, $Q$ being the Sylow $q$-subgroup of $B$ ($Q$ is unique by
(ii), and may be trivial).
\end{itemize}
\end{theorem}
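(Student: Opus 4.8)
The plan is to reduce everything to Huppert's characterisation that a finite solvable group is supersolvable if and only if all of its chief factors are cyclic of prime order, and then to analyse the chief factors of $W=A\wr B$ directly. Write $W=K\rtimes B$, where $K=A^{(B)}=\bigoplus_{b\in B}A_b$ is the base group and $B$ permutes the $|B|$ direct factors $A_b$ regularly, in particular transitively. Since supersolvability passes to subgroups and quotients, if $W$ is supersolvable then $B\cong W/K$ is supersolvable and $A\cong A_1\le W$ is supersolvable; these observations dispose of the trivial half of each necessity statement, and also show that in the sufficiency direction it suffices to produce a chief series of $W$ with cyclic factors. I would treat necessity and sufficiency separately, organising both around the two kinds of chief factors of $W$: those lying above $K$, which are exactly the chief factors of $B$, and those lying inside $K$, which carry all the real information.

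For the necessity of (i) I would use the companion fact that a finite group is nilpotent precisely when it centralises each of its own chief factors. Starting from a chief series of $A$ and applying the $B$-action, one obtains a $W$-invariant series of $K$ whose factors are the induced modules $\bigoplus_{b\in B}(U_b/V_b)$, where $U/V$ runs through the chief factors of $A$. If $W$ is supersolvable, every $W$-chief factor inside such a section is cyclic, i.e. one-dimensional over the relevant $\mathbb{F}_r$; since $B$ permutes the $|B|$ blocks transitively, a $B$-stable line must be supported diagonally across all blocks, and compatibility with the block-wise action of $A$ then forces $A$ to act as scalars, and in fact trivially, on each $U/V$. Hence $A$ centralises all its chief factors and is nilpotent. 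The model case to keep in mind is $S_3\wr C_2$, where the antisymmetric combination of the two copies of the $A$-chief factor $C_3$ produces a non-cyclic chief factor of order $3^2$, so that $A=S_3$ being merely supersolvable is not enough.

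Once $A$ is known to be nilpotent I would write $A=\prod_q A_q$ as the direct product of its Sylow subgroups, so that $K=\bigoplus_q (A_q)^{(B)}$ with each $(A_q)^{(B)}$ a normal $q$-subgroup of $W$. Because $A_q$ is nilpotent it centralises its own chief factors, and distinct Sylow factors commute, so $K$ acts trivially on every section of $(A_q)^{(B)}$ and the $W$-action on these sections factors through $B$. The chief factors of $W$ inside $(A_q)^{(B)}$ are therefore governed by the composition factors of the corresponding $\mathbb{F}_q[B]$-modules, essentially copies of the regular $\mathbb{F}_q[B]$-module. Demanding that all of these be one-dimensional is exactly a statement about $B$ acting through an abelian quotient by characters whose orders are controlled by the roots of unity available in $\mathbb{F}_q$: the abelian/character requirement yields the dichotomy in (ii), with the cross-condition that $B'$ and $A$ be $p$-groups for a common prime when $B$ is non-abelian coming from the nilpotency of $W'$ and the need to keep $[K,B]$ and $B'$ inside a single prime, while the root-of-unity requirement, read off via the Sylow $q$-subgroup $Q$ of $B$ and the exponent $m$ of $B/Q$, yields the congruence in (iii). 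For sufficiency I would run this analysis in reverse: assuming (i)--(iii), decompose $K$ prime by prime, use (iii) to split each $\mathbb{F}_q$-section into one-dimensional $B$-modules and (ii) to control both the chief factors above $K$ and the mixed commutators, thereby assembling an explicit chief series of $W$ with cyclic factors.

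The hard part will be the module-theoretic bookkeeping inside the base group, and in particular pinning down the precise arithmetic form of (iii). The delicate points are the modular case $q\mid |B|$, where the Sylow $q$-subgroup $Q$ acts unipotently and one must argue that the composition factors are governed by $B/Q$; the interaction of the $B$-action with the internal chief factors of the $q$-group $A_q$, not merely the elementary abelian case; and the transitivity argument in the proof of (i), which is what rules out the non-nilpotent examples but requires care to upgrade ``acts as scalars on a diagonal line'' to ``acts trivially''. Everything else is a matter of organising the chief series correctly around the normal series $1\le K\le W$ and invoking the standard properties of supersolvable and nilpotent groups (see, e.g., \cite{Robinson, Hup}).
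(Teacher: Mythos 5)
The paper gives no proof of this statement at all: it is quoted directly from Durbin \cite{Durbin}, so the only meaningful benchmark is Durbin's original argument, and your plan is essentially that argument — split the analysis along $1\le K\le W$ with $K$ the base group, treat the $W$-invariant sections of $K$ as modules induced from chief factors of $A$, and reduce (ii) and (iii) to the question of when every composition factor of the regular $\mathbb{F}_q[B]$-module is one-dimensional. The two delicate points you flag are genuine but close exactly as you suspect. For the necessity of (i): once a $W$-stable line in $\bigoplus_b U_b/V_b$ is known to have full support (the support is invariant under the regular $B$-action), the copy $A_1$ of $A$ fixes all components outside block $1$, so the scalar by which it acts on the line is $1$; hence $A$ has a nonzero fixed vector in its chief factor $U/V$, the fixed points form an $A$-submodule, and irreducibility of $U/V$ upgrades this to trivial action, giving nilpotency of $A$. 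For the modular point: $Q={\bf O}_q(B)$ acts trivially on every irreducible $\mathbb{F}_q[B]$-module (the fixed points of a normal $q$-group form a nonzero submodule), which is precisely what reduces the composition factors to $B/Q$; and since the intersection of the kernels of the irreducibles of $\mathbb{F}_q[B/{\bf O}_q(B)]$ is trivial, one-dimensionality of all of them forces $B'\le {\bf O}_q(B)$, so if $B$ is non-abelian and two distinct primes divided $|A|$, $B'$ would be a $q$-group for both — this gives the dichotomy in (ii) somewhat more cleanly than your appeal to nilpotency of $W'$. One point you did not flag: in the sufficiency direction you also need $B$ itself to be supersolvable (for the chief factors above $K$), and this follows from (ii) and (iii) by the same fixed-point argument applied to $B$-chief factors inside $Q$.

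There is, however, one substantive discovery your sketch would make if carried to completion: the root-of-unity computation you leave implicit in (iii) yields $q\equiv 1 \pmod m$, not $q\equiv -1\pmod m$. Indeed, all irreducible $\mathbb{F}_q$-representations of an abelian $q'$-group of exponent $m$ are one-dimensional if and only if $m\mid q-1$. The congruence as transcribed in the paper is a sign typo relative to Durbin's original, and the printed version is actually false: $C_5\wr C_3$ satisfies (i), (ii) and $5\equiv -1\pmod 3$, yet $x^2+x+1$ is irreducible over $\mathbb{F}_5$, so the base group contributes a non-cyclic chief factor of order $25$; likewise $C_2\wr {\rm Alt}(4)$ satisfies the printed (iii) (here $Q\cong C_2\times C_2$, $m=3$, $2\equiv -1\pmod 3$) but has the non-supersolvable quotient ${\rm Alt}(4)$. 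So a completed version of your proof establishes the corrected statement. Fortunately the discrepancy is harmless for the paper itself: every application of this theorem in the proof of Theorem A (cases (a.2)--(a.4)) only uses the necessity of condition (i), the relevant subgroups $H$ being non-nilpotent.
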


 Finally, we state a  property related to maximal subgroups of direct products as well as some consequences of it, which will be useful when dealing with wreath products in the proof of Theorem A. Its proof is elementary, so we omit it.

 \begin{lemma} \label{d} Let $G$ be a finite group, $D$ the diagonal subgroup of $G\times G$ and $K$ a normal subgroup of $G$.  Then 
$H=\{(x,y)\mid x^{-1}y\in K\}$  $(*)$
is a subgroup of $G\times G$ that contains $D$. Inversely,
if $D \leq H \leq G\times G$, then there exists a normal subgroup $K$
 in $G$, defined as $K=\{g\in G \mid (g,1)\in H\}$ such that constructs $H$ by  formula $(*)$.
 \end{lemma}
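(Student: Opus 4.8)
The plan is to treat the two directions of this correspondence separately, in each case reducing everything to elementary coordinate manipulations in $G\times G$ and exploiting the hypothesis $D\leq H$ in the converse.

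For the forward direction, given $K\trianglelefteq G$, I would identify $H$ as the preimage of a diagonal. Let $\pi\colon G\to G/K$ be the canonical projection and let $\bar D$ denote the diagonal subgroup of $(G/K)\times(G/K)$. Since $\pi\times\pi\colon G\times G\to(G/K)\times(G/K)$ is a homomorphism and $\bar D$ is a subgroup, the preimage $(\pi\times\pi)^{-1}(\bar D)$ is a subgroup of $G\times G$; and by definition $(x,y)\in(\pi\times\pi)^{-1}(\bar D)$ exactly when $\pi(x)=\pi(y)$, i.e. when $x^{-1}y\in K$, so this preimage is precisely $H$. Each $(g,g)\in D$ satisfies $g^{-1}g=1\in K$, so $D\leq H$. (Alternatively one checks directly that $H$ is closed under products and inverses; closure under products is where the normality of $K$ enters, via $x_2^{-1}(x_1^{-1}y_1)y_2=(x_2^{-1}(x_1^{-1}y_1)x_2)(x_2^{-1}y_2)\in K$.)

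For the converse, suppose $D\leq H\leq G\times G$ and set $K=\{g\in G:(g,1)\in H\}$. That $K$ is a subgroup is immediate from $H$ being one: $1\in K$, and $(g,1),(h,1)\in H$ force $(gh,1),(g^{-1},1)\in H$. The normality of $K$ is where the assumption $D\leq H$ first does real work: for $g\in K$ and $a\in G$, the element $(a,a)\in D\leq H$ conjugates $(g,1)$ to $(a,a)^{-1}(g,1)(a,a)=(a^{-1}ga,1)\in H$, whence $a^{-1}ga\in K$.

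It remains to recover $H$ through formula $(*)$, that is, to prove $H=\{(x,y):x^{-1}y\in K\}$. The key technical observation, again using $D\leq H$, is the left/right symmetry
\[
(z,1)\in H \iff (1,z)\in H \qquad (z\in G),
\]
which holds because $(z,z)\in D\leq H$ lets one pass between $(z,1)$ and $(1,z)$ via the identities $(1,z)=(z,1)^{-1}(z,z)$ and $(z,1)=(z,z)(1,z)^{-1}$. Granting this, both inclusions follow by short computations: if $(x,y)\in H$ then $(x,x)^{-1}(x,y)=(1,x^{-1}y)\in H$, so $(x^{-1}y,1)\in H$ by the symmetry and $x^{-1}y\in K$; conversely, if $z:=x^{-1}y\in K$ then $(z,1)\in H$, hence $(1,z)\in H$ by the symmetry, and $(x,y)=(x,x)(1,z)\in H$ since $(x,x)\in D$. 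The main obstacle is purely the bookkeeping of these coordinate identities: the only conceptual point is recognizing that $D\leq H$ is exactly what both forces $K\trianglelefteq G$ and supplies the symmetry needed to reconstruct $H$.
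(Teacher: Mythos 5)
Your proof is correct in both directions: the identification of $H$ as $(\pi\times\pi)^{-1}(\bar D)$ handles the forward direction cleanly, the conjugation by $(a,a)\in D$ correctly yields normality of $K$, and the symmetry $(z,1)\in H \iff (1,z)\in H$ (a Goursat-style observation) completes the reconstruction of $H$ via $(*)$. The paper explicitly omits the proof of this lemma as elementary, so there is no argument of the authors to compare against; your writeup is a complete and natural filling-in of exactly the kind of coordinate computations they had in mind, with no gaps.
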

 
\begin{remark} \label{rd} As a result of Lemma \ref{d}, it  turns out that  there exists a one-to-one correspondence between the  subgroups of $G\times G$ containing the diagonal subgroup and the  normal subgroups of $G$. Another consequence of Lemma \ref{d} is that a finite group $G$ is simple if and only if the diagonal subgroup is maximal in $G\times G$.
\end{remark}

\section{Proof}

\begin{proof}[Proof of Theorem A]
Let  $\overline{G}:=G/S(G)> 1$. Assume first that every maximal subgroup of $\overline{G}$ is supersolvable.
Since $\overline{G}$ cannot be supersolvable because $G$ is non-solvable, then $\overline{G}$ must be a minimal non-supersolvable group, and in turn, is solvable by \cite[Proposition 3(a)]{Doerk},
  a contradiction.
  Therefore, we can assume that $\overline{G}$ has a maximal subgroup $\overline{M_1}$ that is not supersolvable, so $M_1$ has prime or squared prime index in $G$ by hypothesis.
  On the other hand, since $\overline{G}$ is not solvable, not every maximal subgroup of $\overline{G}$ has prime or squared prime index in $\overline{G}$; otherwise, $\overline{G}$ would be solvable according to a well-known result of Hall (quoted in the Introduction).
   Thus, if $\overline{M_2}$ is one of such subgroups, then, again by hypothesis, $M_2$ must be supersolvable.
   We conclude that a maximal subgroup with prime or squared prime index, and a supersolvable maximal subgroup containing $S(G)$ both exist in $G$.
In particular, $S(G)$ is supersolvable, thereby proving the first assertion of the theorem. Henceforth,  we will assume $S(G)=1$.

\medskip
  We claim that $G$ possesses a unique minimal normal subgroup, say $N$.
  Suppose, on the contrary, that there is another minimal normal subgroup $K$ of $G$. Since $S(G)=1$, obviously $K$ is non-solvable.
  Now,   take $M$ to be a supersolvable  maximal subgroup of $G$, which we know to exist.
   The non-solvability of $N$ implies that $G=NM$. As $K\cap N=1$ then
 $$K\cong KN/N\leq G/N\cong M/(N\cap M),$$
 from which we deduce that $K$ is supersolvable, a contradiction. Thus, the claim is proved.

\medskip

   We can write $N=S_1\times \cdots \times S_n$, where $S_i$ are isomorphic  non-abelian simple groups and $n\geq 1$.
   Let $M$ be a non-solvable maximal subgroup of $G$, which obviously is not supersolvable. Then the hypotheses imply that $M$ has prime or squared prime index in $G$.
    In particular, $G$ satisfies the conditions of Theorem \ref{2}, so we conclude that $S_i$ (for every $i$) is isomorphic to one of the groups listed in the thesis of that theorem.
             Hence, we will need later to appeal to the subgroup structure of certain simple groups, particularly ${\rm PSL}_2(p^f)$, for which we refer the reader to \cite[II.8.27]{Hup}.
 On the other hand, the fact that ${\bf C}_G(N)=1$ leads to $N\unlhd G\leq {\rm Aut}(N)$ (we identify $N$ and Inn$(N)$). These properties will also be taken into account throughout  the proof.
  In the following, we distinguish  two possibilities: $N=G$ or $N<G$. We will see that the former  gives rise to (i), while  the latter yields to (ii) and (iii).

\medskip
 {\bf Case 1}. Let us assume that $N=G$, or equivalently, $G$ is non-abelian simple. By hypothesis, there are two possibilities to consider: $G$ possesses a subgroup of prime index or  squared prime index.

 \medskip

 (1.1) First assume that $G$ has a subgroup of index $q^2 $ with $q$ a prime.
  In view of Theorem \ref{clas} and  the fact that $G$ belongs to the list appearing  in Theorem \ref{2}, we  conclude  that $G\cong {\rm PSL}_2(8)$.
   This group meets the hypotheses of the theorem, so this provides one  of the cases of (i).

\medskip
 (1.2) Suppose that $G$ has a prime index subgroup. Then $G$ satisfies the hypotheses of Theorems \ref{g} and \ref{2}, so we may combine both to obtain the following.
  First, (4)  of Theorem \ref{2}  is certainly discarded, so $G$ must be isomorphic to ${\rm PSL}_3(3)$, ${\rm PSL}_2(2^p)$, ${\rm PSL}_2(3^p)$
   with $p$ prime or  ${\rm PSL}_2(p^{2^a})$ with $p$ odd. On the other hand, according to Theorem \ref{g},
    $G\cong {\rm PSL}_2(4) \cong {\rm Alt}(5)$, $G\cong {\rm PSL}_2(11)$ or $G\cong {\rm PSL}_n(q_0)$ with $q_0$ a prime power such that $(q_0^n-1)/(q_0-1)$ is a prime power too.
     By joining both sets of possibilities, and taking into account that the only isomorphisms among distinct projective special linear groups are
      ${\rm PSL}_2(4)\cong {\rm PSL}_2(5)$ and ${\rm PSL}_3(2)\cong {\rm PSL}_2(7)$, it follows that necessarily $n=2$, with the exceptions of the cases when $n=3$ and either $q_0=2$ or $q_0=3$. We discuss below these possibilities.

\medskip
   Assume first that $n=2$.  Then, by Theorem \ref{g}(b), we know that ${\rm PSL}_2(2^p)$ has a subgroup of prime index  if and only if $(2^{2p}-1)/(2^p -1)=2^p +1$ is prime.
    However,  this can only occur when $p=2$, because if $p$ is an odd prime, then the fact that $2^p\equiv (-1)^p ({\rm mod}$ $3)$ implies that $2^p +1$ is divisible by $3$. This yields to  $G\cong {\rm PSL}_2(5)$.
     Similarly, as $3^p +1$  is an even number when $p$ is an odd prime, it cannot be a prime, whence we deduce that $G$ cannot be isomorphic to ${\rm PSL}_2(3^p)$.
      For  $G={\rm PSL}_2(p^{2^a})$ with $p$ odd, apart from ${\rm PSL}_2(11)$ which corresponds to case (c) of Theorem \ref{g} and has a subgroup of index $11$, according to  Theorem \ref{g}(b), we have that  $(p^{2^{2a}}-1)/(p^{2^a}-1)=p^{2^a}+1$ with $a\geq 0$ should be a prime.  But $p$ is odd, so the even number $p^{2^a}+1$ cannot be prime, and thus, these groups can be dismissed too.
       For the remaining cases, that is, when $n=3$,  one easily checks in \cite{Con} that ${\rm PSL}_3(3)$ possesses non-supersolvable maximal subgroups (isomorphic to ${\rm Sym}(4$)) whose index is not prime nor squared prime, so this group is discarded too. Finally, if $q_0=2$, then $G\cong {\rm PSL}_3(2)\cong  {\rm PSL}_2(7)$, which satisfies the required  conditions in the theorem. As a result, we conclude that $G\cong {\rm Alt}(5)$, ${\rm PSL}_2(7)$ or  ${\rm PSL}_2(11)$, all of which satisfy the hypotheses.
          This result, when considered in conjunction with (1.1), yields (i).

 \medskip
{\bf  Case 2}. Assume $N<G$. We distinguish two subcases depending on the supersolvability of the maximal subgroups that do not contain $N$.
 Note that these subgroups necessarily exist for $N$ being non-nilpotent. Indeed, if every maximal subgroup of $G$ contained  $N$, then $N$ would lie in the Frattini subgroup of $G$, a contradiction.

 \medskip
 (2.1) Suppose that there exists a maximal subgroup $M$ such that $N\not\leq M$ and $M$ is not supersolvable.
  According to the hypotheses we distinguish:  $|G:M|$ is a prime or a squared prime.

  \medskip
 Assume first that $|G:M|=q$ is prime. The uniqueness of $N$ implies that core$_G(M)=1$, so $G\leq {\rm Sym}(q)$.
  Additionally, the maximality of $M$ gives $NM=G$, and hence $|N:N\cap M|=q$.
  Then $q$ is the largest prime dividing both $|G|$ and $|N|$, and the $q$-part of $|G|$ is $|G|_q=q$. As a consequence, $n=1$, or equivalently, $N$ is simple.
   Recall the $N$ is one of the simple groups listed in Theorem \ref{2}, so by combining this with Theorem \ref{g} we get the same conclusion by reasoning as in (1.2):
     $N\cong {\rm Alt}(5)$, ${\rm PSL}_2(7)$, ${\rm PSL}_2(11)$, or ${\rm PSL}_3(3)$, which have subgroups of index $5,7,11$ and $13$ respectively,.  Next, we examine  each of these possibilities. Also, recall that $N$ is a proper normal subgroup of $G$ and $G\leq {\rm Aut}(N)$.
 Now, if $N \cong {\rm Alt}(5)$, since $N<G$ then $G\cong {\rm Aut}({\rm Alt}(5))\cong {\rm Sym}(5)$, and we observe that ${\rm Sym}(5)$ satisfies the hypotheses of the theorem.
 An analogous situation happens with $N\cong {\rm PSL}_2(7)$ because  $G\cong {\rm Aut}({\rm PSL}_2(7))\cong {\rm PGL}_2(7)$ also verifies the hypotheses.
  Finally, if $N\cong {\rm PSL}_2(11)$ or ${\rm PSL}_3(3)$, then Out$(N)\cong C_2$ and $G\cong {\rm PGL}_2(11)$ or $G\cong {\rm PGL}_3(3)$, respectively.
  However, $ {\rm PGL}_2(11)$ has maximal subgroups isomorphic to ${\rm Sym}(4)$ that are neither supersolvable nor have prime or squared prime index, and similarly happens with $ {\rm PGL}_3(3)$, which has maximal subgroups isomorphic to ${\rm Sym}(4)\times C_2$. Thus, these cases can be eliminated, and therefore, we obtain two of the groups of (ii).

\medskip

Assume now that   $|G:M|=q^2$ with $q$ prime.
Given that $G=MN$,  if  we denote $N_0=M\cap N$, then by  applying Lemma \ref{m2}, we obtain $N_0=K_1\times \cdots\times K_n$,
where $K_i < S_i$ and $M$ acts transitively on the set  $\{K_1,\cdots, K_n\}$. This occurs because   Lemma \ref{m2}(2) would imply that $|S_i|$ divides $|G:M|$, which is not possible.
 It follows that $|G:M|=|N:N_0|=|S_i:K_i|^n=q^2$, forcing $n\leq 2$. Accordingly, we distinguish two subcases: $n=2$ and $n=1$.

\medskip
  (a) Assume first that $n=2$ and write $N_0=K_1\times K_2$. Given that $|S_i:K_i|=q$, we have $S_i/{\rm core}_{S_i}(K_i)\leq {\rm Sym}(q)$, and hence,
      $q$ does not divide $|K_i|$. Consequently,  $q$ is the largest prime divisor of $|S_i|$.
 We also have $N\lhd G\leq {\rm Aut}(N)$, and it is well known that ${\rm Aut}(N)$ is isomorphic to the standard wreath product ${\rm  Aut}(S_i) \wr {\rm Sym}(2)$ (see for instance \cite[3.3.20]{Robinson}),  that is,
  $G\leq ({\rm Aut}(S_1)\times {\rm Aut}(S_2))\rtimes A$, where $A$ permutes ${\rm Aut}(S_i)$ and is cyclic of order 2.
   Furthermore, notice that $G$ cannot be contained in ${\rm Aut}(S_1)\times {\rm Aut}(S_2)$, otherwise $N$ would not be minimal normal in $G$. This allows to assume without loss of generality that $A\leq G$.  Also,  since $|G:M|=q^2$ with $q$ odd, then $M$ certainly contains some Sylow $2$-subgroup of $G$, so $A^g\leq M$ for some $g\in G$.
    Thus,  we can also assume that $A\leq M$.
    On the other hand, as $S_i$ has a subgroup of  prime index and belongs to the list of simple groups given in Theorem \ref{2}, we combine these results identically as in (1.2) to obtain
     $S_i\cong {\rm Alt}(5)$, ${\rm PSL}_2(7)$, $ {\rm PSL}_2(11)$ or ${\rm PSL}_3(3)$. In the following, we analyze these cases, which have a very similar treatment. 
\medskip

(a.1) Suppose first $S_i\cong {\rm Alt}(5)$, so Aut$(S_i)\cong {\rm Sym}(5)$. Then
$$N= {\rm Alt}(5)\times {\rm Alt}(5) \lhd G\leq ({\rm Sym}(5)\times{\rm Sym}(5))\rtimes A.$$
 As we are assuming $A\leq G$, then there exist exactly three possibilities for $G$: either $G={\rm Alt}(5)\wr A$, or $G={\rm Sym}(5) \wr A$, or $G=NLA$ where $L=\{(d,d) \mid d\in {\rm Sym}(5)\}$ is the diagonal subgroup of ${\rm Sym}(5)\times {\rm Sym}(5)$.
 In the former case,  we consider  the diagonal subgroup  $D$ of $N$, which is  maximal in $N$ (by  Remark \ref{rd}, or instead, see \cite[Problem 7.7]{Isaacs}).
  This certainly implies that $D\times A$ is maximal in $G$. But this subgroup neither is supersolvable nor has prime power index,
    contradicting our hypotheses. Suppose now that $G={\rm Sym}(5)\wr A$ and consider a subgroup  $H$ of ${\rm Sym}(5)$ isomorphic to ${\rm Sym}(3)\times {\rm Sym}(2)$,
     which is maximal in ${\rm Sym}(5)$  \cite{Liebeck}. Then, it is not difficult to see that $H\wr A$ is maximal in $G$ (for instance, by means of \cite{GAP}).
 Alternatively, we can prove it through an application of Lemma \ref{m2}.  Write $H=H_0 \times B$ with $H_0\cong {\rm Sym}(3)\leq {\rm Alt}(5)$ and $B\cong C_2$ not contained in ${\rm Alt}(5)$. We know that $N$ is a minimal normal subgroup of $G$ and we have $(H\wr A)N=G$. Now, if $K$ is a maximal subgroup of $G$ containing $H\wr A$, then by Dedekind's rule, $K=(H \wr A)(K\cap N)$. Therefore, by applying Lemma \ref{m2}, we have two possibilities: either $K\cap N\cong {\rm Alt}(5)$, which is not possible due to order considerations because $|G:K|$ would not be a prime nor a squared prime; or $K\cap N=L\times L$, where $L<{\rm Alt}(5)$. However, the fact that $H_0\times H_0\leq K\cap N$ and that $H_0\cong {\rm Sym}(3)$ is maximal in ${\rm Alt}(5)$ imply that $H_0=L$. As a consequence, $K=H\wr A$,  proving the maximality of $H\wr A$ in $G$, as desired. Nevertheless,  the index of $H\wr A$ in $G$ is 100 and it is not supersolvable, contradicting the hypotheses. Finally, suppose that $G=NLA$. We claim that $LA$ is maximal in $G$. Let $H$ be a subgroup of $G$ with $LA<H<G$. Then $H= H\cap NLA=
(H\cap N)LA $ and hence  $L < L(H\cap N)< {\rm Sym}(5)\times {\rm Sym}(5)$. Since ${\rm Alt}(5)$ is the unique non-trivial proper normal subgroup of ${\rm Sym}(5)$, by using Remark \ref{rd} we obtain that $L(H\cap N)$ is the only proper subgroup of ${\rm Sym}(5)\times {\rm Sym} (5)$ distinct from $L$ that contains $L$. Such subgroup is necessarily
$LN$, so $L(H\cap N)= LN$. Consequently, $H=LNA=G$, a contradiction, which proves the claim.  Thus, $LA\cong {\rm Sym} (5)\times A$ is a maximal subgroup of $G$ that is not supersolvable nor  has prime index or squared prime index. Therefore, the  case $G=NLA$ is excluded too. 

\medskip
(a.2) If  $S_i \cong {\rm PSL}_2(11)$, write $N={\rm PSL}_2(11)\times {\rm PSL}_2(11)$. We have Out$(S_i)\cong C_2$, so again we get three possibilities: $G={\rm PSL}_2(11)\wr A$,  ${\rm PGL}_2(11)\wr A$, or $NLA$ where $L$ is the diagonal subgroup of ${\rm PGL}_2(11)\times {\rm PGL}_2(11)$.
 In order to rule out the former case, it suffices to consider the (maximal) diagonal subgroup $D$  of ${\rm PSL}_2(11)\times {\rm PSL}_2(11)$,
because then $D \times A$ is clearly maximal in $G$, but neither is supersolvable nor has prime power index.
For  discarding $G= {\rm PGL}_2(11)\wr A$, we may consider, for instance, a maximal subgroup $H\cong D_{24}$ of ${\rm PGL}_2(11)$. This subgroup may be written as $H=H_0B$ with $H_0\unlhd H$ and $B\cong C_2$, where $D_{12}\cong H_0$ is a maximal subgroup of ${\rm PSL}_2(11)$.
    A similar argument as in (a.1), or alternatively using \cite{GAP},  allows to demonstrate that $H\wr A$ is  maximal in $G$.
    It is evident that this subgroup  neither has  prime index nor  squared prime index. Furthermore,  $H\wr A$ is not supersolvable by Lemma \ref{wr}. Finally, when $G=NLA$,  taking into account that ${\rm PSL}_2(11)$ is the unique non-trivial proper subgroup of ${\rm PGL}_2(11)$, an analogous argument to that in (a.1),  can be made for proving that $LA$ is  maximal in $G$, and certainly that does not satisfy the hypotheses of the theorem. 

\medskip
(a.3) Assume $S_i\cong {\rm PSL}_2(7)$, write $N={\rm PSL}_2(7)\times {\rm PSL}_2(7)$. Then  Out$(S_i)\cong C_2$ and we have  either $G={\rm PSL}_2(7)\wr A$, ${\rm PGL}_2(7)\wr A$ or $NLA$ where $L$ denotes the diagonal subgroup of  ${\rm PGL}_2(7)\times {\rm PGL}_2(7)$.
 The  reasonings for discarding these groups are  identical to those employed  in (a.1) and (a.2).
  Specifically, for $G={\rm PSL}_2(7)\wr A$ we may consider the diagonal subgroup, and for $G= {\rm PGL}_2(7)\wr A$,
  it suffices to choose a maximal subgroup  $H$ of ${\rm PGL}_2(7)$ isomorphic to $C_7\rtimes C_6$, thereby obtaining  the maximal subgroup $H\wr A$  of $G$.
   This subgroup neither is supersolvable (by Theorem \ref{wr}) nor  has  prime power index. Finally, if $G=NLA$, since ${\rm PSL}_2(7)$ is the unique non-trivial proper subgroup of ${\rm PGL}_2(7)$, it turns out that $LA$ is a maximal non-supersolvable subgroup of $G$ whose index is neither prime nor squared prime.
   
\medskip 
(a.4)
Finally, suppose that $S_i\cong {\rm PSL}_3(3)$ and write $N={\rm PSL}_3(3)\times {\rm PSL}_3(3)$. Then Out$(S_i)\cong C_2$ and we have either $G={\rm PSL}_3(3)\wr A$,  ${\rm PGL}_3(3)\wr A$ or $NLA$ ,with  $L$ being the diagonal subgroup of  ${\rm PGL}_3(3)\times {\rm PGL}_3(3)$.
 Again, these groups are discarded by using identical arguments to those employed  in the above cases.
  For $G={\rm PSL}_3(3)\wr A$ we  consider the diagonal subgroup of $N$, giving rise to a maximal subroup of $G$ which does not satisfy the hypotheses. For $G= {\rm PGL}_3(3)\wr A$,
  we choose a maximal subgroup  $H$ of ${\rm PGL}_3(3)$ isomorphic to ${\rm Sym}(4) \rtimes C_2$, and get the maximal subgroup $H\wr A$  of $G$, which neither is supersolvable (by Theorem \ref{wr}) nor  has  prime power index. Finally, if $G=NLA$, again by taking into account that ${\rm PSL}_3(3)$ is the only non-trivial proper subgroup of ${\rm PGL}_3(3)$, we  deduce that $LA$ is a maximal non-supersolvable subgroup of $G$ whose index is neither prime nor squared prime.

\medskip
(b) Assume that $n=1$. This means that $N$ is non-abelian simple  and $N\lhd G\leq {\rm Aut}(N)$, with  $|G:M|=|N:N\cap M|=q^2$.
In view of  Theorem \ref{clas}  and  the fact that $N$ belongs to the list appearing in Theorem \ref{2}, it is sufficient to consider $N\cong{\rm PSL}_2(8)$.
 In this case, Out$(N)\cong C_3$ and $G\cong {\rm P\Gamma L}_2(8)$, which is easily  verified to meet the hypotheses of the theorem.
 This group corresponds to the (third) remaining group of statement (ii).

 \medskip
(2.2)  Suppose that each maximal subgroup of $G$ that does not contain $N$ is supersolvable.
   First, we claim that the normalizer in $G$ of every Sylow subgroup of $N$ is supersolvable, so  in particular, the normalizer in $S_i$ of every Sylow subgroup of $S_i$ must be supersolvable as well.
   Indeed, let $P$ be a Sylow $p$-subgroup of $N$, where $p$ is an arbitrary prime dividing $|N|$.
   By the  Frattini argument,   $G={\bf N}_G(P)N$, and then there exists a maximal subgroup of $G$ that contains ${\bf N}_G(P)$ and does  not contain $N$.
   Consequently, our above assumption  implies that such a maximal subgroup is supersolvable.
   In particular, ${\bf N}_G(P)$ and ${\bf N}_{N}(P)=\prod{\bf N}_{S_i}(P_i)$  where $P_i=P\cap S_i\in {\rm Syl}_p(S_i)$ are both supersolvable, so the assertion is proved.

\medskip
   Below we present a  case-by-case analysis of the distinct possibilities for $S_i$ appearing in Theorem \ref{2}.

\medskip
 We start our analysis by assuming  $S_i\cong {\rm  PSL}_2(2^p)$ with $p$ prime, and consider the prime $2$.
  Lemma \ref{lemma2}(1) asserts that if $P_i\in {\rm Syl}_2(S_i)$, then ${\bf N}_{S_i}(P_i)\cong P_i\rtimes C_{2^p-1}$, which is not supersolvable.
   This contradicts the aforementioned property, so this case can be dismissed.
   A similar  reasoning can be employed to reject the Suzuki group, ${\rm Sz}(q)$, with $q=2^{2n+1}$,
    given that the normalizer of a Sylow $2$-subgroup $P$ is isomorphic to $P\rtimes C_{q-1}$ \cite[Chap XI, Theorem 3.10]{HB}.

 \medskip

   For the group $S_i\cong {\rm PSL}_3(3)$,  we  should note that all Sylow normalizers are  supersolvable. This can be easily checked with the help of \cite{GAP}, so the above argument cannot be applied.
   However,  we can instead check (e.g. in \cite{Liebeck2}) that $S_i$ has a unique conjugacy class of maximal subgroups  isomorphic to ${\rm Sym}(4)$.
   Then the direct product of $n$ such subgroups, each one lying in a distinct $S_i$,  constitute a single conjugacy class of subgroups of $N$.
   Thus, take $H_0\cong {\rm Sym}(4)$ a maximal subgroup  of $S_i$  and let $H=H_0 \times\cdots\times H_0\leq N$.
  Next, we see that the Frattini argument can be applied so as to get $G={\bf N}_G(H)N$.
   Indeed, if $g\in G$, since $G$ (transitively) permutes the factors $S_i$, then  $H^g=\prod H_0^g$. Furthermore,
   each factor belongs to a distinct $S_i$, is maximal in such $S_i$ and isomorphic to ${\rm Sym}(4)$.
    Accordingly, $H^g$ is the direct product of $n$  maximal subgroups  isomorphic to ${\rm Sym}(4)$, so it belongs to the above-mentioned conjugacy class of subgroups of $N$.
    It follows that $H^g=H^n$ for some $n\in N$, and this yields to $G={\bf N}_G(H)N$, as required.
    Now, if we take a maximal subgroup $M$ of $G$ containing ${\bf N}_G(H)<G$, it is clear that $N\not\leq M$ and thus, by  our assumption, $M$ is supersolvable.
    However, this is not the case for ${\bf N}_G(H)$, as $H$ is not supersolvable, providing a contradiction. The conclusion is that this case cannot occur.

\medskip
Suppose now that $S_i\cong {\rm PSL}_2(3^r)$ with $r$ prime. If $r$ is odd, then  $3^r\equiv 3$ (mod $8$).
 Moreover,  if  $P_i\in {\rm Syl}_2(S_i)$,   then  ${\bf N}_{S_i}(P_i)\cong {\rm Alt}(4)$  by Lemma \ref{lemma2}(4.1).
 As ${\rm Alt}(4)$  is not supersolvable, these cases are excluded too.
  Therefore, the only remaining possibility within this family is when $r=2$, that is, $S_i\cong {\rm PSL}_2(3^2)$. But this group also belongs to  case (3) of Theorem \ref{2}, so it will be discussed below.

  \medskip
     Finally, assume that $S\cong {\rm PSL}_2(p^{2^a})$ with $p$ an odd prime and $a\geq 0$.
      The  normalizers of Sylow $2$-subgroups of this group are supersolvable except when $p^{2^a}\equiv \pm 3$ (mod $8$) (see again Lemma \ref{lemma2}),
      when they are  isomorphic to ${\rm Alt}(4)$, which is not supersolvable.
      But the above  congruence only occurs  when $a=0$ and $p\equiv \pm 3$ (mod  $8$), because if $a\geq 1$, then $p^{2^a}-1=(p^{2^{a-1}}+1)(p^{2^{a-1}}-1)$ with
      $p$ and odd prime is the product of two consecutive even numbers,   and hence $p^{2^a}\equiv 1$ (mod $8$).
        Consequently,  within this family of simple groups, only the case $a=0$ and  $p\equiv \pm 3$  (mod  $8$) can be excluded.

\medskip
       This finishes our discussion on simple groups, so we have shown that $S_i$ can only be isomorphic to one of the simple groups listed in (iii) of the statement of the theorem.
 The rest of the proof is concerned with proving that $G/N$ is a $2$-group.

 \medskip
 Put $S=S_1$. We know that $N\lhd  G \leq {\rm Aut}(N)\cong  {\rm Aut}(S)\wr {\rm Sym}(n)$ \cite[3.3.20]{Robinson},
 where   ${\rm Aut}(S) \wr {\rm Sym}(n)$ denotes the standard wreath product.
  Write $A={\rm Aut}(S)$ and  let $A^*$ be the base group of $A \wr {\rm Sym}(n)$.
  In order to prove that $G/N$ is a $2$-group we will show that $(G\cap A^*)/N$ and  $G/(G\cap A^*)$  are $2$-groups.
  For the former group, we know that
  $$(G\cap A^*)/N\leq {\rm Out}(S)\times\cdots\times {\rm Out}(S).$$
   As we have proved above that $S\cong {\rm PSL}_2(p^{2^a})$ with $p$ an odd prime, then  Out$(S)$ has order $(2, p^{2^a}-1) 2^a= 2^{a+1}$ \cite{Con},
    so our first assertion immediately follows. We prove below that $G/(G\cap A^*$) is a $2$-group as well.
      Notice that $$G/(G\cap A^*)\cong A^*G/A^*=(A^*G\cap {\rm Sym}(n))A^*/A^*\cong A^*G\cap {\rm Sym}(n).$$
We suppose, on the contrary, that there is a prime $r\neq 2$ dividing $|A^*G\cap {\rm Sym}(n)|$ and seek a contradiction.
Let $P_0\in {\rm Syl}_2(S)$ and write $P=P_0\times \cdots \times P_0\in {\rm Syl}_2(N)$. The Frattini argument gives $G={\bf N}_G(P)N$.
 Now, since $|G/(G\cap A^*)|$ is divisible by $r$ and $N\leq G\cap A^*$, one easily deduces that there exists an $r$-element $x\in G\setminus G\cap A^*$ such that $x\in {\bf N}_G(P)$.
 Moreover, we can write $x= as$, with $a\in A^*$ and $1\neq s\in {\rm Sym}(n)$.
 It is straightforward that $s$ normalizes $P$, and  hence, $a\in {\bf N}_{A^*}(P)={\bf N}_A(P_0)\times \cdots \times {\bf N}_A(P_0)$.
 Since $s$ permutes non-trivially the direct factors of $P$, it is immediate that $x$ does not centralize $P$.
 But we have shown above that  ${\bf N}_G(P)$ is supersolvable, so in particular, it possesses a normal $2$-complement. This forces  $x$ to centralize $P$, thereby providing a contradiction.
 This shows that $G/(G\cap A^*)$ is a $2$-group, as required.
 As a result,  ${\bf O}^2(G)=N$.
 Finally, since we are assuming $N<G$, we conclude that $G$ meets  all the conditions of (iii), which ends the proof.
\end{proof}

\begin{proof}[Proof of Corollary B]
It is enough to apply  Theorem A, simply considering that if $G$ is a non-abelian simple group, then $S(G)=1$ and ${\bf O}^2 (G)=G$. In fact, the proof of this corollary corresponds with the proof of case 1 in Theorem A.
\end{proof}

\noindent
{\bf Acknowledgements}
This work is supported by the National Nature Science Fund of China (No. 12471017 and No. 12071181) and the  first named author is  also supported by Generalitat Valenciana,
 Proyecto CIAICO/2021/193. The  second  named author is also supported by the  Natural Science Research Start-up Foundation of Recruiting Talents of Nanjing University of Posts and Telecommunications
  (Grant Nos. NY222090, NY222091).

\medskip
\noindent
{\bf Data availability} Data sharing not applicable to this article as no data sets were generated or analyzed during
the current study.

\bigskip
\noindent
{\bf \large Declarations}

\medskip
\noindent
{\bf Conflict of interest} The authors have no conflicts of interest to declare.

\bibliographystyle{plain}

\end{document}